\def\0{\mathbf 0}
\def\1{\mathbf 1}
\numberwithin{equation}{section}
\newcounter{thm}[section]
\numberwithin{thm}{section}
\newtheorem{theorem}[thm]{Theorem}
\newtheorem{lemma}[thm]{Lemma}
\newtheorem{proposition}[thm]{Proposition}
\newtheorem{problem}{Problem}
\newtheorem{definition}[thm]{Definition}
 \title{Converence and submeasures in Boolean algebras}
 \author{Thomas Jech \\
    e-mail: jech@math.psu.edu
    }
\begin{document}

\maketitle

\centerline{In memory of Bohuslav Balcar}

\abstract A Boolean algebra carries a strictly positive exhaustive submeasure
if and only if it has a sequential topology that is uniformly Fr\'echet.
\endabstract

\section{Introduction.}

In 1937, John von Neumann proposed the problem of characterizing the complete Boolean algebras that carry a strictly positive
$\sigma-$additive measure (The Scottish Book \cite{Sc}, Problem 163.) Such algebras satisfy the {\em countable
chain condition} (ccc): every subset of mutually disjoint elements is countable. In addition, von Neumann
isolated a weak distributive law and asked whether these two properties are sufficient for the existence
of measure.

The first major contribution to von Neumann's problem was given by Dorothy Maharam \cite{Mah}
who identified a part of the problem as a topological problem. She showed that a complete Boolean algebra
carries a strictly positive continuous submeasure if and only if the sequential topology given by
the algebraic convergence is metrizable. This gives a natural decomposition of the von Neumann
problem into two parts:

I. Does every complete weakly distributive ccc Boolean algebra carry a strictly positive continuous submeasure?

II. Does the existence of a strictly positive continuous submeasure imply the existence of a strictly positive
$\sigma-$additive measure?

Part II has become the well known and well-studied Control Measure Problem \cite{Kal} in Functional Analysis
and remained open until Talagrand's solution \cite{Tal} in 2008.

As for Part I, Maharam showed that a negative answer would follow from the existence of a Suslin algebra.
Its existence is equivalent to the negation of Suslin's Hypothesis \cite{Sus} from 1920 but  was not known
until 1967 (cf. \cite{Ten}, \cite{Jech}.) The existence of a Suslin algebra is not provable; it is only consistent
with the axioms of set theory \cite{ST}. The work of Balcar, Jech and Paz\'ak \cite{BJP} in 2003 established that
the positive answer Part I of Maharam's problem is also consistent.

The work on Maharam's problem I followed in Maharam's footsteps and was carried out mostly by Bohuslav
Balcar and his collaborators (cf. \cite{BGJ}, \cite{BFH}, \cite{BJP}, \cite{BJ}, \cite{BJ2}), with additional
contributions of Stevo Todorcevic \cite{Tod}, \cite{Tod2}. The metrizability assumption of Maharam was
successively weakened to ``ccc and Hausdorff'' \cite{BGJ}, ``ccc, weakly distributive and $G_\delta$''
\cite{BJP}, ``weakly distributive and $\sigma-$finite cc'' \cite{Tod2}, and ``uniformly weakly distributive''
\cite{BJ}.

In this paper we generalize the topological characterization to finitely additive measures on Boolean algebras,
when neither the algebra nor the measure are complete. As the Control Measure Problem has been solved
by Talagrand, and there exists an exhaustive submeasure that is not equivalent to a measure, the goal
is to characterize Boolean algebras that carry an exhaustive submeasure. We give a necessary and sufficient
condition in  Theorem \ref{Thm}.

Our investigations were inspired by the work of Bohuslav Balcar who advocated the use of sequential topology
and who introduced me to Maharam's problems. This paper is dedicated to his memory.

\section{Submeasures on Boolean algebras}

A Boolean algebra is a set $B$ with Boolean operations 
$a \vee b$ (join), $a \wedge b$ (meet), $-a$ (complement) and
$a \vartriangle b$ (symmetric difference), the partial order $a\leq b$,
and the smallest and greatest element, $\mathbf 0$ and $\mathbf 1$.
To avoid trivialities we assume that $B$ is infinite and atomless.
We let $B^+=B-\{\0\}.$

An \emph{antichain}
in $B$ is a set $A \subset B$ whose any two elements $a,b$
are \emph{disjoint} i.e. $a \wedge b = \mathbf 0$. 
$B$ satisfies the \emph{countable chain condition} (ccc)
if it has no uncountable antichains.

\begin{definition}\label{sub}
A \emph{submeasure}  on a Boolean
algebra $B$ is a real valued function $m$ on $B$ such
that

\begin{itemize}

\item [(i)] $m(\mathbf 0) = 0      \text{ and } m(\mathbf 1) = 1 $,
\item [(ii)] $\text{if } a\leq b \text{ then } m(a)\leq m(b)$,
\item [(iii)] $m(a \vee b) \leq m(a) + m(b) $.

\end{itemize}

A submeasure $m$ is {\em strictly positive} if

\begin{itemize}
\item [(iv)] $m(a)=0 \text{ only if } a=\0.$
\end{itemize}

$m$ is a {\em measure} if
\begin{itemize}

\item [(v)] $m(a \vee b) =m(a) + m(b) \text{ whenever }  a \text{ and } b
\text{ are disjoint.}$

\end{itemize}

\end{definition}

Following Talagrand \cite{Ta}, a submeasure $m$ is {\em exhaustive} if for every infinite
antichain $\{a_n\}_n$, $\lim_n m(a_n)=0$; $m$ is {\em uniformly exhaustive} if for every 
$\varepsilon>0$ there exists a $K$ such that every antichain has at most $K$ elements with 
$m(a)\ge\varepsilon.$

Note that some condition like ``exhaustive'' is needed to exclude trivial examples
of submeasures like $m(a)=1$ for all $a\neq\0$. Every measure is uniformly exhaustive,
and uniformly exhaustive implies exhaustive.

If $m$ a strictly positive exhaustive submeasure, let, for each $n\in \mathbf N,$
$$C_n = \{a\in B: m(a)\ge 1/n\}.$$
Then $C_1\subset C_2\subset...\subset C_n\subset...$ and $\bigcup_n C_n = B^+,$
and for each $n$, every antichain in $C_n$ is finite. If $m$ is uniformly exhaustive then
there exist $K_n, n\in \mathbf N,$ such that for each $n$, every antichain in $C_n$ has size 
at most $K_n.$ We say that $B$ is $\sigma-${\em finite cc}, resp. $\sigma-${\em bounded cc}.

In \cite{KR}, Kalton and Roberts proved that if $m$ is a  uniformly exhaustive submeasure
then it is equivalent to a measure. This reduced the Control Measure Problem to the
question whether ther exists an exhaustive submeasure that is not uniformly exhaustive.
Such a submeasure was constructed by Talagrand in \cite{Tal}.

Let $m$ be a strictly positive exhaustive submeasure. Letting
$$\rho(a,b) = m(a\vartriangle b),$$
we obtain a distance function that makes $B$ a metric space. The metric $\rho$ defines 
a topology on $B$ that is invariant under Boolean operations; in particular if $U$ is an open 
neighborhood of $\0$ then $a\vartriangle U$ is an open neighborhood of $a$.

We shall isolate a property of topological Boolean algebras that is necessary and sufficient 
for the existence of an exhaustive submeasure.

\section{Convergence ideals on Boolean algebras}

We consider an abstract theory of convergent sequences in topological Boolean algebras.
This is inspired by an earlier work of Balcar, e.g. \cite{BGJ} and \cite{BFH}.

\begin{definition}

Let $B$ be a Boolean algebra. A {\em convergence ideal} on $B$ is a set $I$ of
infinite sequences $\{a_n\}_n$ in $B$      with the following properties:

\begin{itemize}

\item [(i)]  $\bigwedge_n a_n = \0$, i.e. there is no $a>\0$ such that $a\leq a_n$ for all $n$.

\item [(ii)] If $s\in I$ and if $t$ is an infinite subsequence of $s$ then $t\in I$.

\item [(iii)] If $\{a_n\}_n \in I$ and $b_n\le a_n$ for all $n$ then $\{b_n\}_n \in I$.

\item[(iv)] If $\{a_n\}_n \in I$ and $\{b_n\}_n \in I$ then $\{a_n \vee b_n\}_n  \in I$.

\item [(v)] If $\{a_n\}_n$ is an infinite antichain then $\{a_n\}_n \in I$.
 
\end{itemize}

\end{definition}

Given a convergence ideal $I$ we write $\lim_n a_n =\0$ instead of $\{a_n\}_n \in I$
and say that $\{a_n\}$ converges to $\0.$ We extend this to convergence to any $a$ by
$$\lim a_n = a \text{ whenever } \lim (a_n \vartriangle a) = \0.$$
This notion of convergence  has the obvious properties like

-each $a_n$ converges to at most one limit,

-each constant sequence converges to the constant

-$\lim(a_n \vee b_n) = \lim a_n \vee \lim b_n,$
etc.

\medskip

Given a convergence ideal we define a topology on $B$ as follows: a set $A$ is {\em closed} 
if it contains the limits of all sequences in $A$. We call such a topology {\em sequential}.

\begin{definition}
A sequential topology on $B$ is {\em Fr\'echet} if for every $A,$ the closure of $A$ is the set of
all limits of sequences in $A.$
\end{definition}

It should be clear that a sequential topology is Fr\'echet if and only if it has this property:

For every $A$, if for every $k\in \mathbf N$, $\{a^k_n\}_n$ is a sequence in $A$ with $\lim_n a^k_n=b_k$,
and if $\lim_k b_k=c,$ then there exists  a sequence $\{c_k\}_k$ in $A$ such that 
$\lim_k c_k=c.$

\begin{lemma}
A sequential topology on $B$ is Fr\'echet if and only if the convergence ideal has the
{\em diagonal property}:

if $\lim_n x^k_n = \0$ for all $k$, then there exists a function $F:\mathbf N \to \mathbf N$
such that $\lim_k x^k_{F(k)} =\0.$
\end{lemma}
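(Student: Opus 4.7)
My plan is to prove both directions. The implication ``diagonal $\Rightarrow$ Fr\'echet'' is a short calculation using the two-step reformulation of Fr\'echet given just before the lemma; the converse requires the Boolean join structure and is the technical heart of the result.

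For diagonal $\Rightarrow$ Fr\'echet, suppose $\{a^k_n\}_n \subset A$, $\lim_n a^k_n = b_k$, and $\lim_k b_k = c$. I set $v^k_n := a^k_n \vartriangle b_k$, so that $\lim_n v^k_n = \0$ for every $k$. The diagonal property supplies $F:\mathbb N\to\mathbb N$ with $\lim_k v^k_{F(k)} = \0$; combined with $\lim_k (b_k \vartriangle c) = \0$, the elementary inequality $a^k_{F(k)} \vartriangle c \le v^k_{F(k)} \vee (b_k \vartriangle c)$ together with properties (iv) and (iii) gives $\lim_k (a^k_{F(k)} \vartriangle c) = \0$. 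Thus $c_k := a^k_{F(k)} \in A$ satisfies $\lim_k c_k = c$.

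For Fr\'echet $\Rightarrow$ diagonal, assume $\lim_n x^k_n = \0$ for each $k$ and put $y^K_n := \bigvee_{k\le K} x^k_n$, which is monotone in $K$ and, by iterating (iv), satisfies $\lim_n y^K_n = \0$ for every $K$. I then apply the two-step Fr\'echet characterization to $A := \{y^K_n : K,n \in \mathbb N\}$ with $a^K_n := y^K_n$ and $b_K := \0 =: c$, obtaining a sequence $e_m = y^{K(m)}_{N(m)} \in A$ with $\lim_m e_m = \0$. In the favorable case $K(m) \to \infty$, I pass to a subsequence so that $K(m) \ge m$ and set $F(k) := N(k)$: for every $k$, monotonicity of $y^K_n$ in $K$ gives $x^k_{F(k)} \le y^{K(k)}_{N(k)} = e_k$, and property (iii) then yields $\lim_k x^k_{F(k)} = \0$.

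The principal obstacle is the bounded case in which $K(m) \le M$ for all $m$: the produced sequence then sits in only finitely many rows of $A$ and by itself controls only rows of bounded index. I plan to handle this by iterating the Fr\'echet argument on successively restricted sets $\{y^K_n : K > M_i\}$, for each of which $\0$ remains in the closure via the surviving row sequences. Either the iteration eventually produces a Case-A sequence directly, or it yields an increasing chain $M_1 < M_2 < \cdots$ together with null sequences $\{e^{(i)}_m\}_m$ controlling successive blocks of rows; assembling these using property (iv) then produces a single null sequence whose row index is unbounded, returning to the favorable case. It is this assembly step, where joins of null sequences are null, that makes the Boolean structure essential; in a general sequential Fr\'echet space the diagonal property is strictly stronger than Fr\'echet.
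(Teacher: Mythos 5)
Your first direction (diagonal implies Fr\'echet) is correct and is the same computation as the paper's. The gap is in the converse, precisely at the point you flag as the ``principal obstacle'': your favorable case is fine, but the bounded case is the entire content of the lemma, and your proposed repair does not close it. Iterating the Fr\'echet property on the restricted sets $\{y^K_n : K>M_i\}$ only produces, for each $i$, a null sequence $\{e^{(i)}_m\}_m$ whose row indices may again be confined to a bounded block $(M_i,M_{i+1}]$; nothing forces any iteration to land in the favorable case, since each individual row $\{y^K_n\}_n$ is itself a legitimate output of the Fr\'echet property. You are then left with countably many null sequences from which you must select one term per block so that the selected sequence is null with unbounded row index --- but that selection is exactly an instance of the diagonal property for the family $\{e^{(i)}\}_i$, so the assembly step is circular. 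Property (iv) cannot substitute for it: it only gives that \emph{finite} joins of null sequences are null, and a finite join of boundedly-rowed sequences is still boundedly rowed.

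The paper avoids the bounded case altogether by a device you are missing: fix an infinite antichain $\{a_k\}_k$ (null by (v)) and work with $z^k_n = y^k_n \vee a_k$ instead of $y^k_n$. Then row $k$ converges to $a_k\neq\0$, so $\0$ still lies in the closure of $Z=\{z^k_n : k,n\}$ (each $a_k$ is a limit from $Z$ and $\0=\lim_k a_k$), and the Fr\'echet property yields a sequence in $Z$ converging to $\0$; but by uniqueness of limits such a sequence can meet each row only finitely often, so its row index is automatically unbounded. The diagonal selection then goes through exactly as in your favorable case, using $x^k_{F(k)}\le y^{k}_{F(k)}\le z^{m(k)}_{G(m(k))}$ and property (iii). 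Replace your case analysis by this tagging trick and the proof is complete.
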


\begin{proof}
Assume that $I$ has the diagonal property and let $a^k_n, b_k$ and $c$ be such that
$\lim_n a^k_n =b_k$ and $\lim_k b_k =c.$ We shall find a function $F:\mathbf N \to \mathbf N$
such that $\lim_k a^k_{F(k)} =c.$

Let $x^k_n=a^k_n\vartriangle b_k;$ then $\lim_n x^k_n=\0$ for each $k,$ and so there exists 
a function $F$ such that $\lim_k x^k_{F(k)}=\0.$ Now because

$$a^k_{F(k)}\vartriangle c \leq (a^k_{F(k)}\vartriangle b_k)\vee (b_k\vartriangle c),$$
we have
$$\lim_k(a^k_{F(k)}\vartriangle c ) \le \lim_k  (a^k_{F(k)}\vartriangle b_k)
\vee \lim_k (b_k \vartriangle c)=\0$$
and so $\lim_k a^k_{F(k)} =c.$

For the converse, assume that the sequential topology is Fr\'echet and let $x^k_n$ be such 
that $\lim_n x^k_n=\0$ for each $k.$ Let $A=\{a_k\}_k$ be an infinite antichain, and let, for each $n,$
$$y^k_n = x^0_n\vee x^1_n\vee ... \vee x^k_n, \text{ and } z^k_n = y^k_n \vee a_k.$$

For each $k$ we have $\lim_n y^k_n=\0$ and so $\lim_n z^k_n = a_k.$

Let $Z=\{z^k_n: k,n\in \mathbf N\}.$ Each $a_k$ is in the closure of $Z,$ and so is
$\0$ since $\0=\lim_k a_k.$ Because the topology is Fr\'echet, there is a sequence in $Z$
with limit $\0;$ hence there is an infinite set $E\subset \mathbf N \times \mathbf N$ such that 
$\lim \{z^k_n: (k,n)\in E\} = \0.$ For each $k$ there are only finitely many $n$ with $(k,n)\in E;$
this is because $\lim_n z^k_n=a_k.$ Hence there is an infinite subset $M$ of $\mathbf N$
and a function $G$ on $M$ such that $\lim \{z^m_{G(m)}:m\in M\}=\0.$
Let $M=\{m(0)<m(1)<...<m(k)<... : k\in \mathbf N\}$ and let $F(k)=G(m(k)).$ Then
we have, for every $k,$
$$x^k_{F(k)} \leq y^k_{F(k)} \le y^{m(k)}_{F(k)}\le z^{m(k)}_{G(m(k))}$$
and so $\lim_k x^k_F(k)=\0.$

\end{proof}

\begin{definition}\label{unif}
The sequential topology given by a convergence ideal $I$ is {\em uniformly Fr\'echet}
if there exist choice functions $F_k$ acting on infinite sequences such that, whenever
$\{x^k_n\}_n$, $k\in \mathbf N$, are sequences converging to $\0$, then
$$\lim_k F_k(\{x^k_n\}_n)=\0$$
(the $k$th term is the term of the $k$th sequence chosen by the $k$th function.)
\end{definition}

This property is the uniform version of the diagonal property: the element chosen
from the $k$th sequence depends only on $k$ and the sequence, not on the
other sequences.

\begin{proposition}
Let $m$ be a strictly positive exhaustive submeasure on $B$, and let $I$ be the ideal
of all sequences $\{a_n\}_n$ in $B$ such that $\lim_n m(a_n)=0.$ Then $I$
is a convergence ideal and the topology is uniformly Fr\'echet.
\end{proposition}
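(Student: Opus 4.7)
\medskip

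The plan is to handle the two assertions separately, and both are mostly routine consequences of the defining properties of $m$ together with strict positivity and exhaustiveness.

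First I would verify the five axioms of a convergence ideal for $I$. Axiom (i) uses strict positivity: if $a\leq a_n$ for all $n$, then $m(a)\leq m(a_n)\to 0$, so $m(a)=0$ and hence $a=\0$. Axiom (ii) is immediate, since any subsequence of a null sequence of reals is null. Axiom (iii) uses monotonicity of $m$: $b_n\leq a_n$ gives $m(b_n)\leq m(a_n)\to 0$. Axiom (iv) uses subadditivity: $m(a_n\vee b_n)\leq m(a_n)+m(b_n)\to 0$. Axiom (v) is exactly the definition of ``exhaustive.''

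For the uniformly Fr\'echet property, the natural idea is to let $F_k$ pick, from a given sequence, a term whose submeasure is below $1/k$. More precisely, for each $k\in \mathbf N$ and each infinite sequence $s=\{x_n\}_n$ in $B$, define
$$F_k(s)=x_{n_k(s)}, \quad \text{where } n_k(s)=\min\{n:m(x_n)<1/k\},$$
with the convention $F_k(s)=x_0$ if no such $n$ exists. This function depends only on $k$ and on the sequence $s$, as required by Definition \ref{unif}. Now if $\{x^k_n\}_n$ are sequences in $I$, i.e.\ $\lim_n m(x^k_n)=0$ for each $k$, then for each $k$ the minimum $n_k(\{x^k_n\}_n)$ exists, so $m(F_k(\{x^k_n\}_n))<1/k$, whence $\lim_k m(F_k(\{x^k_n\}_n))=0$, which is $\lim_k F_k(\{x^k_n\}_n)=\0$.

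I don't anticipate a genuine obstacle here: the verification of the ideal axioms is bookkeeping, and the uniformly Fr\'echet property reduces to the observation that a single submeasure gives us a uniform quantitative gauge ($1/k$) for picking small elements out of each sequence independently. The only point that requires a little care is to make sure the choice functions $F_k$ truly depend only on the sequence they are applied to, which is transparent in the definition above.
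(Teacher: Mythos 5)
Your proof is correct and follows essentially the same route as the paper: the paper's own proof consists only of the definition of the choice functions $F_k$ (picking a term with $m(a_n)<1/k$), leaving the verification of the convergence-ideal axioms to the reader, which you carry out correctly using strict positivity, monotonicity, subadditivity, and exhaustiveness exactly as intended.
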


\begin{proof}
For $\{a_n\}_n \in I,$ $F_k$ chooses some $a_n$ such that $m(a_n)<1/k.$
\end{proof}

\begin{theorem}\label{Thm}
A Boolean algebra $B$ carries a strictly positive exhaustive submeasure if and only
if it has a uniformly Fr\'echet convergence ideal.
\end{theorem}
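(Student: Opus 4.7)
The forward implication is established by the Proposition above. I focus on the converse: assume $B$ carries a uniformly Fr\'echet convergence ideal $I$ with witnessing choice functions $F_k$, and build a strictly positive exhaustive submeasure $m$.

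For each $k$, set $V_k = \{F_k(s) : s \in I\}$ and let $V_k^* = \{a : a \leq b \text{ for some } b \in V_k\}$ be its downward closure. The uniformly Fr\'echet hypothesis, combined with axiom (iii), gives the \emph{fundamental property}: for any selection $b_k \in V_k^*$ (with witness $s_k \in I$ giving $b_k \leq F_k(s_k)$), the sequence $(b_k)$ lies in $I$. Consequently, for any $a \in B^+$ the set $\{k : a \in V_k^*\}$ must be finite --- otherwise, setting $b_k = a$ on an infinite index set $K$ and $b_k = \0 \in V_k^*$ elsewhere, the constant subsequence $(a,a,\ldots)$ lies in $I$ by axiom (ii), and axiom (i) forces $a = \0$. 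Define the cost $\mathrm{cost}(a) = \inf\{2^{-k} : a \in V_k^*\}$ (with cost $1$ if $a$ lies in no $V_k^*$) and set
\[
m_0(a) = \inf\Bigl\{\sum_{i=1}^n \mathrm{cost}(a_i) : a \leq a_1 \vee \cdots \vee a_n\Bigr\}
\]
over finite covers; then $m = m_0/m_0(\1)$ once $0 < m_0(\1) \leq 1$ is verified. Monotonicity and subadditivity are immediate.

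Exhaustiveness is the easier of the two nontrivial axioms: for an antichain $\{a_n\}$, axiom (v) gives $\{a_n\} \in I$, so $F_k$ picks some $a_{n(k)} \in V_k$, yielding $m_0(a_{n(k)}) \leq 2^{-k}$. Applying the same reasoning to every infinite subsequence (which remains in $I$ by axiom (ii)) upgrades $\liminf_n m_0(a_n) = 0$ to $\lim_n m_0(a_n) = 0$.

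The principal obstacle is strict positivity. Suppose $m_0(a) = 0$ with $a \neq \0$. For each $N$ take a cover $a \leq c_N := \bigvee_{i=1}^{m_N} a_i^{(N)}$ with $\sum_i \mathrm{cost}(a_i^{(N)}) < 2^{-N}$, so each $a_i^{(N)} \in V_{k_i^{(N)}}^*$ with $k_i^{(N)} > N$. Establishing $(c_N) \in I$ would give $a \leq \bigwedge_N c_N = \0$ by axiom (i), the desired contradiction. The difficulty is the possibly unbounded cover size $m_N$: the plan is to observe via the fundamental property that each column $(a_i^{(N)})_N$ (padded with $\0$) is in $I$, use axiom (iv) repeatedly to conclude each partial join $(\bigvee_{i \leq j} a_i^{(N)})_N$ is in $I$, and finally invoke the diagonal property of the preceding Lemma to merge these into a single sequence in $I$ that still dominates $a$. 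This combinatorial bookkeeping --- balancing the growth of $m_N$ against the diagonal extraction --- is the crux of the proof, and the only step where the \emph{uniformity} in Definition \ref{unif} (as opposed to mere Fr\'echet-ness) seems essential.
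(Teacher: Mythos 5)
Your forward direction is exactly the paper's, and your overall strategy for the converse (build the sets $V_k$ from the choice functions, extract the ``fundamental property'' from uniform Fr\'echetness, show each nonzero $a$ escapes all but finitely many $V_k^*$, and handle exhaustiveness via axioms (v) and (ii)) matches the paper's first stage. But the step you yourself flag as the crux --- strict positivity of the infimum-over-covers functional $m_0$ --- contains a genuine gap, and the plan you sketch does not close it. There are two problems. The first is repairable: your sets $V_k^*$ need not be nested, so an element of cost at most $2^{-k}$ lies in \emph{some} $V_j^*$ with $j\ge k$ but not necessarily in $V_N^*$ for the index $N$ at which you want to apply the fundamental property to a column; the paper avoids this by working with $U_n=V_1\cap\cdots\cap V_n$. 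The second is fatal to the sketch as written: after you place each partial join $y^j_N=\bigvee_{i\le j}a_i^{(N)}$ into a sequence in $I$ (via (iv)) and invoke the diagonal property, the resulting sequence $\{y^j_{F(j)}\}_j\in I$ has as its $j$-th term only the join of the \emph{first $j$} elements of the $F(j)$-th cover, whereas $a$ is dominated only by the join of all $m_{F(j)}$ of them. Since $m_{F(j)}$ may exceed $j$ and you have no control over $F$, the diagonalized sequence need not dominate $a$, so axiom (i) yields nothing. This is not bookkeeping: the inf-over-covers construction genuinely collapses unless the neighborhoods of $\0$ satisfy a group-topology-type condition.

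The paper supplies precisely the missing ingredient before defining any submeasure: it proves that for every $n$ there is a $k$ with $U_k\vee U_k\subset U_n$, i.e.\ that the fragmentation may be taken to be \emph{graded}. The argument is short --- if $a_k,b_k\in U_k$ with $a_k\vee b_k\in C_n$ for all $k$, then $\{a_k\}_k$ and $\{b_k\}_k$ are in $I$ by Lemma \ref{tight}, hence $\{a_k\vee b_k\}_k\in I$ by axiom (iv), contradicting Lemma \ref{Gdelta} --- and it is the only place where axiom (iv) is essentially used. With $U_{n+1}\vee U_{n+1}\subset U_n$ in hand, one sets $V_r=U_{n_1}\vee\cdots\vee U_{n_k}$ for dyadic $r=\sum_i 2^{-n_i}$ and $m(a)=\inf\{r: a\in V_r\}$; then $V_r\vee V_s\subset V_{r+s}$ gives subadditivity while $V_r\subset U_n$ for $r<2^{-n}$ gives strict positivity with no covering argument at all. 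I recommend you prove the graded lemma and replace your $m_0$ by this dyadic construction; your exhaustiveness and fragmentation arguments then carry over essentially unchanged.
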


We prove the theorem in the next Section.

\section{Fragmentations}\label{frag}

If $m$ is a measure (or an exhaustive submeasure) on a Boolean algebra $B$, let for each $n$
$$C_n = \{a: m(a)\ge 1/n\}.$$
Every antichain in every $C_n$ is finite (or of bounded size if $m$ is uniformly exhaustive) and
so (in modern terminology), $B$ is $\sigma-$finite cc, or $\sigma-$bounded cc. These properties
were considered in \cite{HT} and \cite{Ga}. More recently, approximations of $B$ by a chain
$\{C_n\}_n$ of sets with various properties have proved useful, see e.g. \cite {BJ}, p. 260.
In Balcar's terminology, such approximations are called fragmentations.

\begin{definition}

A \emph{fragmentation} of a Boolean algebra $B$ is a sequence of subsets
$C_1\subset C_2\subset...\subset C_n\subset ...$ such that
$\bigcup_n C_n =B^+$ and for every $n$, if $a\in C_n$ and $a\le b$ then $b\in C_n$.

A fragmentation is \emph{$\sigma$-finite cc} if for every $n$, every
antichain in $C_n$ is finite.

A fragmentation is \emph{$\sigma$-bounded cc} if for every $n$
there is a constant $K_n$ such that every antichain $A\subset C_n$
has size $\le K_n$.

A fragmentation is \emph{graded} if for every $n$, whenever
$a\cup b\in C_n$ then either $a\in C_{n+1}$ or $b\in C_{n+1}$.

\end{definition}

In this Section we prove that if $B$ has a uniformly Fr\'echet ideal $I$ then
$B$ has a graded $\sigma-$finite cc fragmentation.

Let $I$ be a uniformly Fr\'echet ideal and let $F_k$, $k\in \mathbf N$, be choice functions
from definition \ref{unif}. For each $n$, let
$$V_n =\{a: a\le F_n(\vec x) \text{ for some } \vec x\in I\}, U_n = V_1\cap...\cap V_n, C_n = B-U_n.$$

We claim that $\{C_n\}_n$ is a fragmentation: it is enough to show that each $a\ne \0$ is in some $C_n.$
If $a\notin C_n$ for all $n$, then for every $n$ there exists some $\vec x^n\in I$ such that $a\le F_n(\vec x^n).$
By the uniform diagonality, $\{F_n(\vec x^n)\}_n \in I$ and it follows that $a=\0.$

A similar argument shows

\begin{lemma}\label{tight}
If $\{a_n\}_n$ is such that $a_n\notin C_n$ for each $n$ then $\{a_n\}_n\in I.$
\end{lemma}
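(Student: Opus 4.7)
The plan is to simply unfold the definitions and apply the uniform diagonal property, mirroring the argument that already established $\{C_n\}_n$ is a fragmentation.

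First, I would translate the hypothesis $a_n \notin C_n$ into its contrapositive form. Since $C_n = B - U_n$ with $U_n = V_1 \cap \cdots \cap V_n$, having $a_n \notin C_n$ means $a_n \in U_n$, and in particular $a_n \in V_n$. By the definition of $V_n$, this yields, for each $n$, a sequence $\vec{x}^n \in I$ such that
\[
a_n \le F_n(\vec{x}^n).
\]

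Next, I would invoke the uniform Fréchet property of $I$ (Definition \ref{unif}): since each $\vec{x}^n$ is in $I$, the diagonal sequence formed by the choice functions satisfies
\[
\{F_n(\vec{x}^n)\}_n \in I.
\]
Finally, using axiom (iii) of a convergence ideal (closure under pointwise domination), the inequality $a_n \le F_n(\vec{x}^n)$ immediately gives $\{a_n\}_n \in I$.

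There is no real obstacle; the content of the lemma is exactly that the $V_n$'s were \emph{designed} so that any sequence escaping $C_n$ at each stage $n$ is dominated by a diagonal sequence of the form $F_n(\vec{x}^n)$, which the uniform Fréchet hypothesis forces into $I$. The only minor subtlety is remembering to use $a_n \in V_n$ rather than the stronger $a_n \in U_n$, and to cite axiom (iii) rather than reprove it. This is why the author prefaces the statement with ``A similar argument shows''---it really is the same three-line argument used to verify the fragmentation property, just applied to a varying sequence $a_n$ instead of a fixed $a$.
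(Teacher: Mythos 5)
Your proof is correct and follows exactly the paper's own argument: unfold $a_n\notin C_n$ to get $a_n\le F_n(\vec x^n)$ for some $\vec x^n\in I$, apply the uniform Fr\'echet property to conclude $\{F_n(\vec x^n)\}_n\in I$, and finish with axiom (iii). No differences worth noting.
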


\begin{proof}
For each $n$ there exists some $\vec x^n\in I$ such that $a_n\le F_n(\vec x^n)$
Because $\{F_n(\vec x^n)\}_n \in I$, we have $\{a_n\}_n\in I.$
\end{proof}

\begin{lemma}\label{Gdelta}
For every $n$, no $\vec a\in I$ is a subset of $C_n$.
\end{lemma}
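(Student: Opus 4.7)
The plan is to argue by contradiction. Fix $n$ and suppose there is some $\vec a=\{a_k\}_k\in I$ with $a_k\in C_n$ for every $k$ (this is the natural reading of ``$\vec a$ is a subset of $C_n$''). Unwrapping the definition $C_n=B-U_n=B-(V_1\cap\cdots\cap V_n)$, each $a_k$ must fail membership in at least one $V_{j_k}$ with $j_k\in\{1,\ldots,n\}$; that is, $a_k\not\le F_{j_k}(\vec x)$ for any $\vec x\in I$.

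Since the indices $j_k$ range over the finite set $\{1,\ldots,n\}$, pigeonhole supplies an infinite $K\subseteq\mathbf N$ and a single $j\le n$ with $a_k\notin V_j$ for all $k\in K$. The subsequence $\vec a'=(a_k)_{k\in K}$ still lies in $I$ by axiom (ii). I then apply the choice function $F_j$ to $\vec a'$: since $F_j(\vec x)$ is, by definition, one of the terms of its input sequence, we have $F_j(\vec a')=a_{k_0}$ for some $k_0\in K$. Taking $\vec x=\vec a'\in I$ as witness in the definition of $V_j$, the trivial relation $a_{k_0}\le F_j(\vec a')$ yields $a_{k_0}\in V_j$, contradicting $k_0\in K$.

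The only subtle point I anticipate is noticing that the finiteness of $n$ is exactly what makes pigeonhole go through: we cannot in general force each $a_k$ to evade a common $V_j$, only to evade at least one of finitely many candidates, so some $V_j$ is evaded infinitely often; closure under subsequences then allows us to feed the bad subsequence back into $F_j$ to manufacture the contradictory witness. Notably, the uniform diagonal property of $I$ is \emph{not} needed at this step --- that axiom was already consumed in the preceding argument that $\{C_n\}_n$ is a fragmentation. The present lemma uses only the definitions of $V_j$ and $F_j$ together with the subsequence-closure axiom.
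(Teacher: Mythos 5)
Your argument is correct and is essentially the paper's own proof: the paper likewise pigeonholes the terms of $\vec a\subset C_n=B-(V_1\cap\dots\cap V_n)$ onto a single index $k\le n$ to get an infinite subsequence $\vec b\in I$ disjoint from $V_k$, and then observes that $F_k(\vec b)$, being a chosen term of $\vec b$, lies in $V_k$ by definition --- a contradiction. Your added remarks (that subsequence-closure is the only ideal axiom used, and that the uniform diagonal property is not needed here) are accurate but only make explicit what the paper leaves implicit.
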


\begin{proof}
If some $\vec a\in I$ is a subset of $C_n$ then for some $k\le n$,
some infinite subsequence $\vec b$ of $\vec a$ is disjoint from $V_k.$
But $F_k(\vec b)\in V_k.$
\end{proof}

Note that the fragmentation is $\sigma-$finite cc, because every infinite antichain is in $I$.

\begin{lemma}
For every $n$ there exists a $k$ such that $U_k \vee U_k \subset U_n$. Hence
$\{C_n\}_n$ has a subfragmentation that is $\sigma-$finite cc and graded.
\end{lemma}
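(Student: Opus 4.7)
The plan is to prove the first assertion by contradiction, converting a would-be failure into a sequence that simultaneously lies in $I$ and inside some $C_n$, in contradiction with Lemma~\ref{Gdelta}; the subfragmentation then comes from a straightforward thinning-out argument.

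Concretely, suppose that for some $n$ there is no $k$ with $U_k\vee U_k\subset U_n$. Then for every $k$ I can pick $a_k,b_k\in U_k$ with $a_k\vee b_k\notin U_n$, that is, $a_k\vee b_k\in C_n$. Since $U_k=V_1\cap\dots\cap V_k\subset V_k$, I can choose $\vec z^k,\vec w^k\in I$ with $a_k\le F_k(\vec z^k)$ and $b_k\le F_k(\vec w^k)$. The crucial use of the uniform Fr\'echet property (Definition~\ref{unif}) is that $\{F_k(\vec z^k)\}_k$ and $\{F_k(\vec w^k)\}_k$ then both lie in $I$; combined with closure properties (iv) and (iii) of a convergence ideal, this yields $\{a_k\vee b_k\}_k\in I.$ But this sequence is entirely contained in $C_n$, contradicting Lemma~\ref{Gdelta}.

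For the ``hence'' part, I would recursively select indices $k_1<k_2<\dots$ with $k_1=1$ and $k_{n+1}$ large enough that $U_{k_{n+1}}\vee U_{k_{n+1}}\subset U_{k_n}$, which is possible by the first half. Setting $C'_n=C_{k_n}$ gives a chain that is increasing, each upward closed, and with $\bigcup_n C'_n=B^+$ (since $\{k_n\}$ is cofinal in $\mathbf N$), so it is a fragmentation. It is $\sigma$-finite cc because antichains inside $C'_n\subset C_{k_n}$ are already finite. It is graded because if $a,b\notin C'_{n+1}$ then $a,b\in U_{k_{n+1}}$, hence $a\vee b\in U_{k_n}$ by the choice of $k_{n+1}$, i.e., $a\vee b\notin C'_n.$ I expect the only subtle step to be the matching of indices: the inclusion $U_k\subset V_k$ is exactly what lets me choose witnesses $\vec z^k,\vec w^k$ indexed by the same $k$ as the choice function $F_k$, which is precisely what the uniform version of the diagonal property needs in order to fire.
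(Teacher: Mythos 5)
Your argument is correct and is essentially the paper's: you inline the content of Lemma~\ref{tight} (choosing witnesses $\vec z^k,\vec w^k\in I$ with $a_k\le F_k(\vec z^k)$, $b_k\le F_k(\vec w^k)$ and applying the uniform diagonal property plus closure under $\vee$ and minorization) to produce a sequence of $I$ contained in $C_n$, contradicting Lemma~\ref{Gdelta}, exactly as the paper does. Your thinning-out construction for the ``hence'' clause correctly supplies the details the paper leaves implicit.
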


\begin{proof}
Let $n\in \mathbf N$ and assume that for each $k$ there exist $a_k$ and $b_k$ in $U_k$
such that $c_k=a_k \vee b_k \in C_n.$ By Lemma \ref{tight}, $\{a_k\}_k$ and
$\{b_k\}_k$ are in $I$ and so $\{c_k\}_k \in I$. But $\{c_k\}_k \subset C_n,$
a contradiction.
\end{proof}

\section{Construction of an exhaustive submeasure}

let $B$ be a Boolean algebra and assume that $B$ has a graded $\sigma-$finite cc
fragmentation $\{C_n\}_n$. We shall define a submeasure on $B$.

For each $n$ let $U_n = B-C_n$ and $U_0=B$; we have $U_0\supset U_1\supset
...\supset U_n\supset ...$ and $\bigcap_n U_n = \{\0\}$.
For $X, Y\subset B$, let $X\vee Y$ denote the set $\{x\cup y: x\in X, y\in Y\}$.
As the fragmentation is graded, we have
$$U_{n+1}\vee U_{n+1}\subset U_n$$
for all $n\ge 0$. Cosequently, if $n_1<...<n_k$ then
$$U_{n_1+1}\vee...\vee U_{n_k +1}\subset U_{n_1}.$$
This is proved by induction on $k$.

Let $D$ be the set of all $r=\sum^k_{i=1} 1/2^{n_i}$ where $0<n_1<...<n_k$.
For each $r\in D$ we let $V_r=U_{n_1}\vee...\vee U_{n_k}$ and $V_1=U_0=B$.
For each $a\in B$ define
$$m(a)=\inf\{r\in D\cup\{1\}: a\in V_r\}.$$
Using the above property of the $U_n$, it follows that $V_r\subset V_s$ if $r<s$, and
$V_r\vee V_s\subset V_{r+s}$ when $r+s\le 1.$ From this we have $m(a)\le m(b)$
if $a\subset b$, and $m(a\cup b)\le m(a)+m(b).$

The submeasure $m$ is strictly positive because if $a\ne \0$ then for some $n$,
$a\notin U_n,$ and hence $m(a)\ge 1/2^n.$

Finally, $m$ is exhaustive: If $A$ is an infinite antichain, then for each $n$ only finitely many
elements of $A$ belong to $C_n$ and hence only finitely many have $m(a)\ge 1/2^n.$

\section{Measures and submeasures}

Following Maharam's work \cite{Mah}, the early work on the von Neumann Problem and the 
Control Measure Problem isolated the following classes of Boolean algebras: 
complete Boolean algebras that carry a $\sigma-$additive measure (measure algebars), complete
Boolean algebras that carry a continuous submeasure (Maharam algebras),
Boolean algebras that carry a finitely additive measure and Boolean algebras that
carry exhaustive submeasure. (For a detailed analysis, see Fremlin's article \cite{Fr}
in the Handbook of Boolean Algebras \cite{Handbook}, and Balcar et al. \cite{BGJ}.)

The Control Measure Problem was reduced to the question whether every exhaustive
submeasure is equivalent to a finitely additive measure. The result of Kalton and 
Roberts \cite{KR} shows that every uniformly exhaustive submeasure is equivalent to
a finitely additive measure, and in 2007, Michel Talagrand answered the
question in \cite{Tal} by constructing an exhaustive measure (on a countable Boolean
algebra) that is not uniformly exhaustive.

Returning to the characterization of algebras with submeasure by convergence,
we have the following:

\begin{definition}
Let $I$ be a convergence ideal on a Boolean algebra $B.$ $B$ is {\em $I-$ concentrated}
if for every sequence $\{A_n\}_n$ of finite antichains with $|A_n|\ge n$ there exists
a sequence $\{a_n\}_n\in I$ such that $a_n\in A_n$ for all $n.$
\end{definition}

\begin{theorem}\label{finitely add}
 A Boolean algebra carries a strictly positive finitely additive measure if and only if it has a uniformly
 Fr\'echet convergence ideal $I$ and is $I-$concentrated.
\end{theorem}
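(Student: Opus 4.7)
The plan is to reduce both directions of Theorem~\ref{finitely add} to Theorem~\ref{Thm}, the construction of Section~\ref{frag}, and the Kalton--Roberts theorem from~\cite{KR}.

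For the forward direction, suppose $\mu$ is a strictly positive finitely additive measure on $B$. Since $\mu$ is in particular a strictly positive exhaustive submeasure, the Proposition preceding Theorem~\ref{Thm} says that $I_\mu=\{\{a_n\}_n:\lim_n \mu(a_n)=0\}$ is a uniformly Fr\'echet convergence ideal. To verify $I_\mu$-concentration, let $\{A_n\}_n$ be finite antichains with $|A_n|\ge n$. Finite additivity yields $\sum_{a\in A_n}\mu(a)\le \mu(\1)=1$, so some $a_n\in A_n$ satisfies $\mu(a_n)\le 1/n$; then $\{a_n\}_n\in I_\mu$.

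For the reverse direction, assume $I$ is uniformly Fr\'echet and $B$ is $I$-concentrated. Section~\ref{frag} produces from $I$ a graded $\sigma$-finite cc fragmentation $\{C_n\}_n$. The key use of concentration is to upgrade this to $\sigma$-\textbf{bounded} cc: if some fixed $C_n$ contained arbitrarily large finite antichains $A_k$ with $|A_k|\ge k$, then $I$-concentration would provide $a_k\in A_k\subset C_n$ with $\{a_k\}_k\in I$, contradicting Lemma~\ref{Gdelta}. Hence each $C_n$ admits a uniform bound $K_n$ on antichain size.

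Now rerun the construction of Section~5 on this graded $\sigma$-bounded cc fragmentation to obtain a strictly positive submeasure $m$. I claim $m$ is uniformly exhaustive. Indeed, if $m(a)\ge 1/2^n$, then by definition $a\notin V_r$ for every $r\in D$ with $r<1/2^n$; taking $r=1/2^{n+1}$ gives $a\notin V_{1/2^{n+1}}=U_{n+1}$, i.e.\ $a\in C_{n+1}$. Since antichains in $C_{n+1}$ have size at most $K_{n+1}$, the same bound holds for antichains in $\{a:m(a)\ge 1/2^n\}$; for arbitrary $\varepsilon>0$ pick $n$ with $1/2^n\le \varepsilon$. Thus $m$ is uniformly exhaustive, and by Kalton--Roberts \cite{KR} it is equivalent to a strictly positive finitely additive measure on $B$.

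The substantive new step is the passage from $\sigma$-finite cc to $\sigma$-bounded cc using $I$-concentration together with Lemma~\ref{Gdelta}; this is the only place where the new hypothesis does real work. Everything else is either unpacking the definitions of Section~5 or an appeal to results already established in the paper.
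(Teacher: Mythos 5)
Your proof is correct, and its central step coincides with the paper's: the paper's entire argument for the hard direction is exactly your observation that $I$-concentration plus Lemma~\ref{Gdelta} forces each $C_k$ of the Section~\ref{frag} fragmentation to have bounded antichains, upgrading the graded $\sigma$-finite cc fragmentation to a $\sigma$-bounded cc one. Where you diverge is the endgame. The paper passes from the $\sigma$-bounded cc graded fragmentation to a measure by invoking the Kalton--Roberts \emph{method} to verify Kelley's intersection-number condition, deferring all details to~\cite{Je2}; you instead rerun the Section~5 construction to get a strictly positive submeasure $m$, check directly from the definition of $m$ that $\{a: m(a)\ge 1/2^n\}\subset C_{n+1}$ so that $m$ is uniformly exhaustive, and then apply the Kalton--Roberts \emph{theorem} (already quoted in Section~2 of the paper) as a black box. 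Your route is more self-contained within this paper: it reuses machinery already built in Section~5 and cites only the statement of Kalton--Roberts rather than its proof technique, at the mild cost of needing that ``equivalent to a measure'' preserves strict positivity (it does, since absolute continuity of $m$ with respect to the measure forces the measure to vanish only at $\0$). You also supply the easy forward direction (pigeonhole on $\sum_{a\in A_n}\mu(a)\le 1$), which the paper omits entirely; that verification is correct as written.
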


\begin{proof}
Let $\{C_n\}_n$ be the fragmentation constructed in  Section \ref{frag}. We claim that each
 $C_k$ has a bound on the size of antichains Otherwise, let $A_n$ be antichains in $C_k$
 of size at least $n$, and let $\{a_n\}_n\in I$ be such that $a_n\in A_n.$ This
 contradicts Lemma \ref{Gdelta}.
 
 It follows that $B$ has a $\sigma-$bounded cc graded fragmentation. Using the Kalton-Roberts
 method, one can verify Kelley's condition \cite{Ke} for the existence of a strictly positive measure;
 for details, see \cite{Je2}.
\end{proof}

We wish to mention a subtle point here. In the proof of Theorem \ref{finitely add} we use
a graded $\sigma-$bounded cc fragmentation. It is not enough to have a $\sigma-$bounded
cc algebra with a graded fragmentation: the metric completion of 
Talagrand's example is a Maharam algebra that does not carry a measure but is
$\sigma-$bounded cc.

\section{Continuity and weak distributivity}

Let $B$ be a complete Boolean algebra. $B$ has a $\sigma-$additive measure if and only if
it has finitely additive measure and is weakly distributive. This is explicitly stated in Kelley's
\cite{Ke} but was already known to Pinsker, see \cite{KVP}. Moreover, the proof
of continuity is exactly the same when applied to submeasures, see Fremlin \cite{Fr},
p. 946:

\begin{theorem}
Let $B$ be a complete Boolean algebra.

(a) $B$ carries a strictly positive $\sigma-$additive measure if and only if it is weakly
distributive and carries a strictly positive finitely additive measure.

(b) $B$ carries a strictly positive continuous submeasure if and only if it is weakly distributive
and carries a strictly positive exhaustive sumbeasure.
\end{theorem}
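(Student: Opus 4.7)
The plan is to treat (a) and (b) in parallel, writing $\mu$ for either the finitely additive measure of (a) or the exhaustive submeasure of (b), and indicating where the arguments diverge. For the necessity of the stated conditions, I would argue as follows. If $\mu$ is $\sigma$-additive (resp.\ continuous) then it is trivially finitely additive (resp.\ exhaustive). For weak distributivity, take any double sequence $\{a^k_n\}$ with $a^k_n\downarrow_n\0$ for each $k$; continuity gives $\mu(a^k_n)\to 0$ as $n\to\infty$, so one picks $n_k$ with $\mu(a^k_{n_k})<2^{-k}$, and countable subadditivity of the continuous $\mu$ then delivers the ``meet of tails is $\0$'' statement that expresses weak distributivity in a complete Boolean algebra.

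For the nontrivial sufficiency direction, both parts reduce to the single continuity statement: if $B$ is complete and weakly distributive and $\mu$ is a strictly positive exhaustive submeasure, then $a_n\downarrow\0$ implies $\mu(a_n)\to 0$. In case~(a), once this is known, $\sigma$-additivity of a finitely additive measure follows by applying continuity to the descending residues $a - \bigvee_{i\le n}b_i$ of a disjoint decomposition $\bigvee_i b_i = a$ and invoking exact finite additivity; in case~(b), continuity is the definition.

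To establish continuity, I would argue by contradiction. Suppose $a_n\downarrow\0$ but $\mu(a_n)\ge 2\varepsilon$ for all $n$. Strict positivity of $\mu$ forces $B$ to be ccc, and completeness then lets each $a_n$ be partitioned, by greedy exhaustion, into a countable antichain $\{c^k_n\}_k$ with $\mu(c^k_n)<\varepsilon\cdot 2^{-n-k}$ and $a_n=\bigvee_k c^k_n$. Set $t^N_n=\bigvee_{k>N} c^k_n$; for each fixed $n$, $t^N_n\downarrow_N\0$. Weak distributivity applied to the matrix $\{t^N_n\}$ yields a function $F\colon\mathbf N\to\mathbf N$ such that $\bigwedge_{n_0}\bigvee_{n\ge n_0}t^{F(n)}_n=\0$. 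Combining $a_n\le \bigl(\bigvee_{k\le F(n)}c^k_n\bigr)\vee t^{F(n)}_n$ with finite subadditivity of $\mu$ and the geometric bound $\sum_{k\le F(n)}\mu(c^k_n)<\varepsilon\cdot 2^{-n}$, the only way $\mu(a_n)\ge 2\varepsilon$ can persist is if $\mu(t^{F(n)}_n)\ge\varepsilon$ for every $n$. But then $\{t^{F(n)}_n\}$ is a sequence of elements of $\mu$-value at least $\varepsilon$ whose algebraic $\limsup$ is $\0$, and refining to a disjoint subsequence inside the $t^{F(n)}_n$ (standard in a complete ccc algebra) produces an infinite antichain with $\mu\ge\varepsilon/2$ throughout, contradicting exhaustiveness of $\mu$.

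The main obstacle is the step where, in case~(b), only finite subadditivity of $\mu$ is available instead of countable subadditivity. The workaround, used classically by Maharam and made explicit by Fremlin (\cite{Fr}, p.~946), is that the weak-distributivity argument is carried out within a finite window at each index $n$: subadditivity is applied only to the $F(n)+1$ geometrically small pieces $c^k_n$, while the outer tail $t^{F(n)}_n$ is controlled entirely through weak distributivity together with exhaustiveness; countable additivity of $\mu$ is never invoked. This is the precise sense in which, as the paper notes, ``the proof of continuity is exactly the same when applied to submeasures''.
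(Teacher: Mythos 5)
First, a point of comparison: the paper does not actually prove this theorem --- it is quoted as a classical result, with the measure case attributed to Kelley and Pinsker and the submeasure case to Fremlin (\cite{Fr}, p.~946) --- so you are necessarily supplying your own argument, and it has to stand on its own. Your necessity direction is fine, as is the reduction of (a) to the continuity statement. The problem is the core of the sufficiency argument: your application of weak distributivity is vacuous. Since $t^N_n\le a_n$ for every $N$, you have $\bigvee_{n\ge n_0}t^{F(n)}_n\le\bigvee_{n\ge n_0}a_n=a_{n_0}$ for \emph{every} function $F$, so the conclusion $\bigwedge_{n_0}\bigvee_{n\ge n_0}t^{F(n)}_n=\0$ already follows from $a_n\downarrow\0$ with no distributivity at all. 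After the (correct) estimate $\mu(t^{F(n)}_n)\ge\varepsilon$ you are therefore left with a sequence $s_n=t^{F(n)}_n\le a_n$ satisfying $\mu(s_n)\ge\varepsilon$ and $\limsup_n s_n=\0$ --- which is exactly the configuration you started from (take $s_n=a_n$). No progress has been made, and weak distributivity has not genuinely been used.

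The final step is then where all the content would have to live, and it fails. To disjointify you must pass to something like $d_i=s_{n_i}-\bigvee_{j>i}s_{n_j}$, and to keep $\mu(d_i)\ge\varepsilon/2$ you need $\mu\bigl(\bigvee_{j>i}s_{n_j}\bigr)\le\varepsilon/2$; but the tail joins $\bigvee_{j>i}s_{n_j}$ form a decreasing sequence with meet $\0$, so bounding their submeasure is precisely an instance of the continuity being proved. (Subtracting the earlier, finitely many terms instead gives $\mu(d_i)\ge\mu(s_{n_i})-\mu(\bigvee_{j<i}s_{n_j})$, where the subtrahend can be near $1$.) Exhaustivity controls the individual members of an antichain, never the submeasure of a join of small pieces --- that is the whole difference between a submeasure and a measure --- so ``refining to a disjoint subsequence with $\mu\ge\varepsilon/2$ throughout'' is not a standard fact; it is the theorem in disguise, and the argument is circular at the decisive point. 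A workable proof must apply weak distributivity to a matrix that is \emph{not} dominated by the $a_n$ (for instance to partitions of $\1$ refining $\{a_n,-a_n\}$, interpolated by a single dense set of elements meeting each partition in only finitely many pieces) and must combine this with a sharper use of additivity in case (a), or of exhaustivity via a Drewnowski-type subsequence extraction in case (b); your closing appeal to Fremlin describes the correct conclusion but not an argument that your construction delivers.
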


We note that (for ccc algebras) weak distributivity is equivalent to the Fr\'echet property for the algebraic
convergence (and uniform weak distributivity is equivalent uniformly Fr\'echet.) Thus:

\begin{theorem} 
Let $B$ be a complete Boolean algebra.

(a) $B$ is a Maharam algebra if and only if it is uniformly weakly
distributive \cite{BJ}, \cite{BJ2}.

(b) $B$ is a measure algebra if and only if it is uniformly wekaly 
distributive and concentrated \cite{J}.
\end{theorem}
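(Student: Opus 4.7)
The plan is to assemble three ingredients: the preceding theorem (relating $\sigma$-additive measures and continuous submeasures to weak distributivity), Theorems \ref{Thm} and \ref{finitely add} of this paper, and the identification -- noted in the remark just above the theorem -- of weak distributivity with the Fr\'echet property of the algebraic convergence on a ccc complete Boolean algebra, and of uniform weak distributivity with its uniformly Fr\'echet version (this identification is the content of \cite{BJ}, \cite{BJ2}).

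For part (a), the intended chain of equivalences reads: $B$ is a Maharam algebra iff $B$ is weakly distributive and carries a strictly positive exhaustive submeasure (preceding theorem); iff $B$ is weakly distributive and admits a uniformly Fr\'echet convergence ideal (Theorem \ref{Thm}); iff the algebraic convergence on $B$ is itself uniformly Fr\'echet, equivalently $B$ is uniformly weakly distributive. The step that needs work is the last ``only if'': that weak distributivity together with the existence of \emph{some} uniformly Fr\'echet convergence ideal forces the algebraic convergence ideal $I_{\mathrm{alg}}$ to be uniformly Fr\'echet. I would handle this by applying the preceding theorem to produce a strictly positive continuous submeasure $m$ on $B$, then defining choice functions $F_k(\{a_n\}) = a_{N_k}$ where $N_k$ is the first index with $m(\bigvee_{n\ge N_k} a_n) < 2^{-k}$. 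Continuity of $m$ makes $F_k$ well-defined on algebraically convergent sequences, and the $\sigma$-subadditive estimate on $\bigvee_{k\ge j} F_k(\vec x^k)$ shows that the diagonal is algebraically convergent to $\0$, which is the uniformly Fr\'echet property of $I_{\mathrm{alg}}$.

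Part (b) follows by the same strategy with Theorem \ref{finitely add} in place of Theorem \ref{Thm}: $B$ is a measure algebra iff it is weakly distributive and carries a strictly positive finitely additive measure (preceding theorem), iff it is weakly distributive, admits a uniformly Fr\'echet ideal $I$, and is $I$-concentrated (Theorem \ref{finitely add}), iff it is uniformly weakly distributive and concentrated. The main obstacle is the last step -- transferring concentration from the auxiliary ideal supplied by Theorem \ref{finitely add} to the algebraic convergence itself. This is carried out in \cite{J} by exploiting the Kelley-type intersection-number condition that the finitely additive measure imposes on the antichains of size at least $n$.
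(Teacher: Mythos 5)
Your assembly of the preceding theorem, Theorems \ref{Thm} and \ref{finitely add}, and the identification of (uniform) weak distributivity with the (uniformly) Fr\'echet property of the algebraic convergence is exactly the derivation the paper intends: it offers no argument beyond the remark preceding the statement and the citations to \cite{BJ}, \cite{BJ2}, \cite{J}. Your added detail --- transferring the uniformly Fr\'echet property to the algebraic convergence via the $\sigma$-subadditivity of the continuous submeasure, and deferring the concentration transfer to \cite{J} --- correctly fills in the steps the paper delegates to the references, so this is essentially the same approach.
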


\section{Fr\'echet and uniformly Fr\'echet}

The Fr\'echet property does not (provably) imply uniform Fr\'echet. This is because 
a Suslin algebra (the algebra obtained from a Suslin tree) is weakly distributive
but not $\sigma-$finite cc. This counterexample is only consistent with ZFC
(\cite{Ten}, \cite{Jech}) and does not exist in some models of set theory
\cite{ST}. Moreover, by \cite{BJP} it is consistent that every weakly distributive ccc 
complete Boolean algebra is a Maharam algebra (and hence uniformly weakly
distributive). This shows that Fr\'echet could imply uniformly Fr\'echet
for the algebraic convergence but the proof does not seem to work
in general. Thus we end with the following open problem:

\begin{problem}
Is there (in ZFC) a Boolean algebra with a convergence ideal $I$ that is Fr\'echet 
but not uniformly Fr\'echet?
\end{problem}


\bibliographystyle{plain}

\begin{thebibliography}{10}




\bibitem{BGJ}
B.~Balcar, W.~G{\l}\'{o}wczy{\'n}ski, and T.~Jech.
\newblock The sequential topology on complete {B}oolean algebras.
\newblock {\em Fund. Math.}, 155(1):59--78, 1998.



\bibitem{BFH}
B. Balcar, F. Fran\v ek, J. Hru\v ska.
\newblock Exhaustive zero-convergence structures on Boolean algebras.
\newblock {\em Acata Univ. Carolinae, Math. et Phys.}, 40: 27--41, 1999. 



\bibitem{BJP}
B.~Balcar, T.~Jech, and T.~Paz\'ak.
\newblock Complete ccc {B}oolean algebras, the order sequential topology and a
  problem of von {N}eumann.
\newblock {\em Bull. London Math. Society}, 37:885--898, 2005.



\bibitem{BJ}
Bohuslav Balcar and Thomas Jech.
\newblock Weak distributivity, a problem of von {N}eumann and the mystery of
  measurability.
\newblock {\em Bull. Symbolic Logic}, 12(2):241--266, 2006.



\bibitem{BJ2}
B. Balcar and T. Jech.
\newblock Contributions to the theory of weakly distributive complete Boolean algebras.
\newblock {\em A Tribute to Andrzej Mostowski}, A. Ehrenfeucht et al., eds., 2007



\bibitem{Fr}
D.~H. Fremlin.
\newblock Measure algebras.
\newblock In {\em Handbook of {B}oolean algebras. {V}ol. 3}, pages 877--980.
  North-Holland Publishing Co., Amsterdam, 1989.
 
 \bibitem{Ga} 
H.~ Gaifman.
\newblock Concerning measures on Boolean algebras. 
\newblock {\em Pacific J. Math.}, 14: 61--73, 1964.

\bibitem{HT}
A.~Horn and A.~Tarski.
\newblock Measures in {B}oolean algebras.
\newblock {\em Trans. Amer. Math. Soc.}, 64:467--497, 1948.

\bibitem{Jech}
T.~Jech.
\newblock Non-provability of {S}ouslin's hypothesis.
\newblock {\em Comment. Math. Univ. Carolinae}, 8:291--305, 1967.

\bibitem{J}
T.~Jech.
\newblock Algebraic characterizations of measure algebras.
\newblock {\em Proc. Amer. Math. Soc.}, 136:1285--1294, 2008.

\bibitem{Je2}
T.~Jech.
\newblock Measures on Boolean algebras.
\newblock {\em Fundamenta Math}, to appear

\bibitem{Kal}
N.~J. Kalton.
\newblock The  Maharam problem.
\newblock Publ. Math. Univ. Pierre et Marie Curie 94, Univ. Paris VI, Paris, 1989.

\bibitem{KR}
N.~J. Kalton and J.~W. Roberts.
\newblock Uniformly exhaustive submeasures and nearly additive set functions.
\newblock {\em Trans. Amer. Math. Soc.}, 278:803--816, 1983.

\bibitem{KVP}
L.~V. Kantorovi{\v{c}}, B.~Z. Vulikh, and A.~G. Pinsker.
\newblock {\em Functional Analysis in Partially Ordered Spaces}.
\newblock 1950.
\newblock (in Russian).

\bibitem{Ke}
J.~L. Kelley.
\newblock Measures on {B}oolean algebras.
\newblock {\em Pacific J. Math.}, 9:1165--1177, 1959.

\bibitem{Mah}
D.~Maharam.
\newblock An algebraic characterization of measure algebras.
\newblock {\em Ann. of Math. (2)}, 48:154--167, 1947.

\bibitem{Sc}
D.~Mauldin, editor.
\newblock {\em The {S}cottish {B}ook}.
\newblock Birkh\"auser Boston, Mass., 1981.

\bibitem{Handbook}
J.~D. Monk, editor.
\newblock {\em Handbook of {B}oolean algebras.}
\newblock North-Holland Publishing Co., Amsterdam, 1989.

\bibitem{ST}
R.~M. Solovay and S.~Tennenbaum.
\newblock Iterated {C}ohen extensions and {S}ouslin's problem.
\newblock {\em Ann. of Math. (2)}, 94:201--245, 1971.

\bibitem{Sus}
M. Suslin.
\newblock Probl\`eme 3.
\newblock {\em Fund. Math.},  1:223, 1920.


\bibitem{Ta}
M.~Talagrand.
\newblock A simple example of pathological submeasure. 
\newblock {\em Math. Ann.}, 252:97--102, 1979/80. 


\bibitem{Tal}
M.~Talagrand
\newblock Maharam's Problem.
\newblock {\em Ann. of Math. (2)}, 168:981--1009, 2008.


\bibitem{Ten}
S.~Tennenbaum.
\newblock Souslin's problem.
\newblock {\em Proc. Nat. Acad. Sci. U.S.A.}, 59:60--63, 1968.

\bibitem{Tod}
S.~Todorcevic.
\newblock A dichotomy for {P}-ideals of countable sets.
\newblock {\em Fund. Math.}, 166(3):251--267, 2000.

\bibitem{Tod2}
S.~Todorcevic.
\newblock A problem of von {N}eumann and {M}aharam about algebras supporting
  continuous submeasures.
\newblock {\em Fund. Math.}, 183:169--183, 2004.




\end{thebibliography}

\end{document}